\documentclass{amsart}
\usepackage{amsmath,amsthm,amsfonts,amssymb,amscd,mathrsfs}
\usepackage{psfrag,graphicx}

\usepackage{epic,eepic}
\usepackage{color}


\thanks{2000 {\it Mathematics Subject Classification}.  37D05, 37D25, 37C25}

 \keywords{Oseledets splittings; mean angle, independence number. }

\theoremstyle{plain}

\newtheorem{Thm}{Theorem}[section]
\newtheorem{Lem}[Thm]{Lemma}

\theoremstyle{remark}
\newtheorem{Def}[Thm] {Definition}

\long\def\begcom#1\endcom{}

\newcommand{\Vol}{\operatorname{vol}}

\newcommand{\orb}{\operatorname{orb}}
\newcommand{\length}{\operatorname{\length}}

\newcommand{\vol}{\operatorname{vol}}

\newcommand{\dist}{\operatorname{dist}}
\newcommand{\cu}{\operatorname{\mathcal{U}}}
\newcommand{\cv}{\operatorname{\mathcal{V}}}
\newcommand{\cw}{\operatorname{\mathcal{W}}}
\newcommand{\cm}{\operatorname{\mathcal{M}}}
\newcommand{\ca}{\operatorname{\mathcal{A}}}

\def\supp{\operatorname{supp}}

\def\length{\operatorname{length}}





\begin{document}

\title[Approximation of Oseledets splittings  ]
      {Approximation of Oseledets splittings}

\author[Liang, Liao, Sun]
{Chao Liang$^{*}$, Gang Liao$^{\dag}$, Wenxiang Sun$^{\dag}$}

\thanks{$^{*}$ Applied Mathematical Department, The Central University of Finance and Economics,
Beijing 100081, China; Liang is supported by NSFC(\# 10901167)}\email{chaol@cufe.edu.cn}

 \thanks{$^{\dag}$ School of Mathematical Sciences,
Peking University, Beijing 100871,
China; Sun is supported by NSFC(\#
10231020) and Doctoral Education Foundation of China}\email{liaogang@math.pku.edu.cn}
\email{sunwx@math.pku.edu.cn}

\date{July, 2011}

\maketitle

\begin{center} \it Peking University,\,\,\,July, 2011 \end{center}

\begin{abstract}
We prove that the Oseledets splittings of an ergodic hyperbolic
measure of a $C^{1+ r}$ diffeomorphism can be approximated by that
of atomic measures on hyperbolic periodic orbits. This removes the
assumption on simple spectrum in \cite{Liang} and strengthens
Katok's closing lemma.
\end{abstract}


\section{Introduction}

 Let $M$ be a
compact connected   $d$-dimensional Riemannian manifold without
boundary. For $1\leq i\leq d$, $x\in M$,  let $G(i,x)$ denote the
$i$-dimensional Grassmann space of $T_x M$. Given integers
$n_1,n_2\cdots,n_s$ with $\sum_{i=1}^sn_i=d$ and $1\leq n_s\leq d$,
we define  a bundle $\cu(n_1,\cdots,n_s)=\cup_{x\in
M}\cu(n_1,\cdots,n_s; x)$, where the fiber over $x$ is
$$\cu(n_1,\cdots,n_s; x)=\big{\{}(E_1, \cdots,E_{s})\in G(n_1,x)\times
\cdots \times G(n_s,x) \big{\}}.$$ Set
$\cv(n_1,\cdots,n_s)=\cup_{x\in M} \cv(n_1,\cdots,n_s; x)$, where
the fiber over $x$ is
$$\cv(n_1,\cdots,n_s; x)=\big{\{}(E_1, \cdots,E_{s})\in
\cu(n_1,\cdots,n_s)\mid \,E_1(x)\oplus\cdots\oplus E_{s}(x)=T_xM
\big{\}}.$$ It is immediate that by the definitions,
$\cv(n_1,\cdots,n_s)$ is not compact but
$$\cu(n_1,\cdots,n_s)=\overline{\cv(n_1,\cdots,n_s)}$$ is compact.

 Given integers $1\leq
l_1<l_2<\cdots<l_k<d$, we define  a filtration bundle
$\cw(l_1,\cdots,l_k)=\cup_{x\in M}U(l_1,\cdots,l_k; x)$, where the
fiber over $x$ is \begin{eqnarray*}\cw(l_1,\cdots,l_k;
x)=\big{\{}(F_1, \cdots,F_{k}; H_1, \cdots, H_k)\mid&& F_i\in
G(l_i,x),\,H_i\in G(d-l_i,x),\\[2mm]
&& F_i\subset F_{i+1}, \quad  H_{i+1}\subset H_{i}
\big{\}}.\end{eqnarray*} Clearly,  by the definitions,
$\cw(l_1,\cdots,l_k)$ is compact.

Throughout this paper, we assume the following relations
$$k=s-1,\quad l_i=\sum_{j=1}^in_j,\quad  1\leq i\leq s-1.$$

For the sake of statement, we appoint $\cu=\cu(n_1,\cdots,n_s)$,
$\cv=\cv(n_1,\cdots,n_s)$ and  $\cw=\cw(l_1,\cdots,l_k)$.  Given
$(E_1, \cdots,E_{s})\in \cv, $ we define $$\sigma ((E_1,
\cdots,E_{s}))=(E_1, E_1\oplus E_2, \cdots,
\oplus_{i=1}^{s-1}E_i;\quad \oplus_{i=2}^{s-1}E_i, \cdots,
E_{s-1}\oplus E_s, E_s)\in \cw.$$ Then $\sigma: \cv\to \cw$ is a
onto diffeomorphism. Denote $\cw_1=\sigma(\cv)$ and
$\sigma^{-1}=\sigma^{-1}\mid_{\cw_1}$.

 For a $p$-frame $\xi=(u_1,\cdots,u_p)$ which spans
a $p$-dimensional space $E$, we define $\Vol(\xi)=\Vol(u_1,
u_2,\cdots, u_p)$ the volume of the parallelepiped generated by the
vectors $u_1, u_2, \cdots, u_p$. More precisely, we choose an
orthonormal $p-$frame $\zeta=(w_1, w_2, \cdots, w_p),$ $w_i\in T_xM,
\,\,\,\,i=1, \cdots, p, $ which generates the same linear subspace
of $T_xM$ as $\xi$ does and take a unique $p\times p $ matrix $A$
with $\xi=\zeta A.$ Then we define the volume of $\xi$ by
$$\Vol(\xi):=|\det\,A|.$$
We remark that the volume  $\Vol(\xi)$ does not depend on the choice
of $\zeta$, since the determinate of a transition matrix between two
orthonormal frames is $\pm 1$. Hence, $\Vol(\xi)$ is well defined.
Given any linear onto transformation $\Phi$ from one linear space
$E$ to a linear space $F$, we define
$$\det(\Phi)=\frac{\vol(\Phi(\xi))}{\vol(\xi)},$$ where $\xi$ is a
frame spanning $E$. The definition does not depend on the choice of
$\xi$.
\bigskip

 Let $f$ be a $C^{1}$ diffeomorphism of  $M$.  For
 $1\leq i\leq k$,  define projections
$$\pi_i(\gamma)=F_i,\quad \widehat{\pi}_i(\gamma)=H_i,$$
and functions on $\cw(l_1,\cdots,l_k)$
$$\phi_i(\alpha)=\det(Df_x\mid_{ E_i}),\,\,\,\psi_i(\gamma)=\det(Df_x\mid_{ F_i}),$$
where $\gamma=(F_1,\cdots,F_k; H_1,\cdots,H_k)\in
\cw(l_1,\cdots,l_k; x).$ Then $\phi_i$, $\psi_i$ $(1\leq i\leq k)$
are bounded and continuous on $\cw(l_1,\cdots,l_k)$.

In what follows, suppose $f$ preserves an ergodic measure $\omega$.
Then there exist

\begin{enumerate}
\item[(a)]  real numbers  $\lambda_{1}< \cdot\cdot\cdot <
\lambda_{s}(s \leq d)$;

 \item[(b)]  positive integers $n_{1},
\cdot\cdot\cdot, n_{s}$, satisfying $n_{1}+ \cdot\cdot\cdot
+n_{s}=d$;

\item[(c)]  a Borel set $O(\omega)$, called  Oseledets
basin of $\omega$, satisfying $f(O(\omega))=O(\omega)$ and
$\omega(O(\omega))=1$;

\item[(d)]  a measurable splitting, called Oseledets splitting,
$T_{x}M=E_1(x)\oplus\cdot\cdot\cdot\oplus E_s(x)$ with
dim$E_i(x)=n_{i}$ and $Df(E_i(x))=E_i(fx)$,
\end{enumerate}
such that
$$\lim_{n\rightarrow\pm \infty}\frac{\log\|Df^{n}v\|}{n}=\lambda_{i},$$
for $\forall x \in O(\omega)$, $v \in E_i(x),\,\, i=1, 2, \cdots,s$.

\bigskip

  (a) and (d) in the Oseledets theorem allow
us to arrange the Oseledets splitting according to the increasing
order of the Lyapunov exponents. To avoid excessive terminology, we
will arrange the Oseledets splitting at every point in the Oseledets
basin in this way throughout this paper without explanation. We call
the measure $\omega$ hyperbolic if none of  its Lyapunov exponents
is zero.

From now on, suppose $f : M \rightarrow M$ is $C^{1+r}$
diffeomorphism. We give a quick review concerning some notions and
results of Pesin theory. We point the reader to \cite{KM}\cite{Po}
for more details.
\subsection{Pesin set}
\begin{Def}\label{Def6} Given $\alpha,\beta\gg\epsilon
>0$, and for all $k\in \mathbb{Z}^+$,  the Pesin block
$\Lambda_k=\Lambda_k(\alpha,\beta;\,\epsilon)$ consists of all
points $x\in M$ for which there is a splitting $T_xM=E_x^s\oplus
E_x^u$ with the invariance property $(Df^m)E_x^s=E_{f^mx}^s$ and
$(Df^m)E_x^u=E_{f^mx}^u$, and satisfying:\\
 $(a)~
\|Df^n|E_{f^mx}^s\|\leq  e^{\epsilon k}e^{-(\beta-\epsilon)
n}e^{\epsilon|m|}, \forall m\in\mathbb{Z}, n\geq1;$\\
 $(b)~
\|Df^{-n}|E_{f^mx}^u\|\leq  e^{\epsilon k}e^{-(\alpha-\epsilon)
n}e^{\epsilon|m|}, \forall m\in\mathbb{Z}, n\geq1;$ and\\
 $(c)~
\tan(Angle(E_{f^mx}^s,E_{f^mx}^u))\geq e^{-\epsilon
k}e^{-\epsilon|m|}, \forall m\in\mathbb{Z}.$
\end{Def}

\begin{Def}\label{Def7} $\Lambda(\alpha,\beta;\epsilon)=\overset{+\infty}{\underset{k=1}{\cup}}
\Lambda_k(\alpha,\beta;\epsilon)$ is a Pesin set.
\end{Def}
We say an invariant measure $\omega$ is related to the Pesin set $
\Lambda=\Lambda(\alpha,\beta;\epsilon)$,  if
$\omega(\Lambda(\alpha,\beta;\epsilon))=1$.

The following theorem is the main theorem in \cite{WS}.

\bigskip

\begin{Thm}{\rm\cite{WS}}\label{thm exponents approximation}
Let $M$ be a compact $d-$dimensional Riemannian manifold. Let $f:M
\rightarrow M$ be a $C^{1+r}$  diffeomorphism, and let $ \omega$ be
an ergodic hyperbolic measure with Lyapunov exponents
$\lambda_{1}\leq \ldots\leq \lambda_{r}<0<\lambda_{r+1}\leq
\ldots\leq \lambda_{d}.$ Then the Lyapunov exponents of $ \omega $
can be approximated by Lyapunov exponents of hyperbolic periodic
orbits. More precisely, for any $\varepsilon >0$, there exists a
hyperbolic periodic point $z$ with Lyapunov exponents
$\lambda_{1}^{z}\leq\ldots\leq\lambda_{d}^{z}$ such that
$\mid\lambda_{i}-\lambda_{i}^{z}\mid<\varepsilon,$ $ i=1, ..., d.$
\end{Thm}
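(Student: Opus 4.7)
\bigskip

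\noindent\textbf{Proof proposal for Theorem \ref{thm exponents approximation}.}

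The plan is to combine three classical ingredients: Pesin theory (to provide a splitting with uniform estimates on a set of full $\omega$-measure), Birkhoff's ergodic theorem (to realize the Lyapunov exponents $\lambda_i$ as time-averages of bounded functions such as $\log\phi_i$ or $\log\psi_i$ along $\omega$-generic orbits), and Katok's closing lemma for non-uniformly hyperbolic systems (to produce, from a recurrent piece of orbit in a Pesin block, a hyperbolic periodic orbit which shadows it and whose invariant splitting is $C^0$-close to the Pesin splitting at the original point).

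First I would fix $\varepsilon>0$, choose $\alpha,\beta$ smaller than $\min_i|\lambda_i|$ and $0<\epsilon\ll \varepsilon$, and select a Pesin block $\Lambda_k=\Lambda_k(\alpha,\beta;\epsilon)$ of positive $\omega$-measure. By the Birkhoff ergodic theorem applied to the bounded continuous functions $\phi_i,\psi_i$ on $\cw$, there is a full-measure subset of $\Lambda_k$ on which the Birkhoff averages of $\log\phi_i$ and $\log\psi_i$ converge to the partial sums $\lambda_1+\cdots+\lambda_i$ and $\lambda_{s-i+1}+\cdots+\lambda_s$ of Lyapunov exponents. Combining this with Poincaré recurrence inside $\Lambda_k$, I obtain a point $x\in\Lambda_k$ with return times $n_j\to\infty$ such that $f^{n_j}(x)$ is arbitrarily close to $x$ and both the Birkhoff averages (up to time $n_j$) and the Pesin splittings at $x$ and $f^{n_j}(x)$ are already $\varepsilon$-good.

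Next I would apply Katok's shadowing closing lemma: for $j$ large enough, there is a hyperbolic periodic point $z_j$ of period $n_j$ whose orbit uniformly $\delta_j$-shadows the segment $x,fx,\ldots,f^{n_j-1}x$, with $\delta_j\to 0$, and whose hyperbolic splitting $E^s_{z_j}\oplus E^u_{z_j}$ is $\varepsilon$-close to the Pesin splitting $E^s_x\oplus E^u_x$. Since $f$ is $C^{1+r}$, a standard bounded-distortion argument on the shadowing tube (where the Pesin charts give uniform hyperbolicity with constants depending only on $k$, not on $n_j$) yields
\[
\bigl|\log|\det(Df^{n_j}|_{E^u_{z_j}})|-\log|\det(Df^{n_j}|_{E^u_x})|\bigr|\le C_k,
\]
and the analogous estimate on $E^s$, as well as on every step of the Oseledets filtration. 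Dividing by $n_j$ and letting $j\to\infty$ makes the left-hand side vanish, so each partial sum of the Lyapunov exponents of $z_j$ converges to the corresponding partial sum for $\omega$.

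The main obstacle is passing from approximation of partial sums of exponents to approximation of the \emph{individual} exponents $\lambda_i$ with the correct multiplicities $n_i$. Taking consecutive differences of partial sums only gives the average exponent on each Oseledets block of $\omega$, and a priori the exponents of $z_j$ inside the subspace shadowing $E_i(x)$ could spread apart. To handle this I would refine the construction by working simultaneously with all exterior-power cocycles $\wedge^{l}Df$ for $l=l_1,\ldots,l_{s-1}$ and with the filtration bundle $\cw(l_1,\ldots,l_k)$ introduced earlier: the Pesin block comes with uniform angle estimates between all Oseledets subspaces, so the shadowing periodic splitting of $z_j$ decomposes compatibly with the dimensions $n_1,\ldots,n_s$, and the exponents of $Df^{n_j}|_{E_i(z_j)}$ can be squeezed between consecutive partial-sum approximations. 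This last squeezing step is essentially the content of \cite{WS} and is where the hypothesis $f\in C^{1+r}$ (used both for Pesin theory and for the distortion control) is essential.
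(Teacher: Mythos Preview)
The paper does not give its own proof of this theorem: it is quoted verbatim from \cite{WS} and used as a black-box input to the proof of Theorem~\ref{Thmmeanangleclose}. There is therefore nothing in the paper to compare your argument against.

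That said, your outline is the standard route to the result and matches what one finds in \cite{WS} and related work: Pesin blocks for uniform hyperbolicity, recurrence plus Katok's closing lemma to produce a shadowing periodic orbit, and bounded distortion along the shadowed segment to transfer Jacobian (hence partial-sum) information. Your identification of the main difficulty---going from partial sums $\sum_{j\le l}\lambda_j$ to individual $\lambda_l$---is correct. One point to tighten: controlling the partial sums only at the Oseledets breakpoints $l_1,\ldots,l_{s-1}$ is not enough, since within a block of multiplicity $n_i>1$ the periodic exponents could a priori spread. The usual fix is to control \emph{all} partial sums $\sum_{j\le l}\lambda_j^{z}$ for $1\le l\le d$, e.g.\ via all exterior powers $\wedge^l Df$ (equivalently, via the singular values of $Df^{n_j}$ along the shadowed orbit), and then take consecutive differences. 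With that refinement your sketch is a faithful summary of the argument in \cite{WS}.
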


\bigskip

\begin{Def}\label{DefMeanAng}
Let $E$ and $F$ be two  $Df-$invariant sub-bundles of $TM$. The
angle between $E(x)$ and $F(x)$, $x\in M$, is defined as follows:
$$\sin\angle(E(x),\,F(x)):=\inf\limits_{0\neq u\in E(x),0\neq v\in F(x)}\sin\angle(u,\,v)=
\inf\limits_{0\neq u\in E(x),0\neq v\in F(x)}\frac{\|u\wedge
v\|}{\|u\|\|v\|},$$
 where $\wedge$ denotes the wedge product. We call
$$m\angle(E(x), F(x)):=\lim_{n\to+\infty}\frac{1}{n}\sum^{n-1}_{i=0}\angle(Df^iE(x),\,Df^iF(x))$$
the mean angle between $E$ and $F$ at $x$.
\end{Def}

We view $T_xM$ as $\mathbb{R}^d$. Given $v\in \mathbb{R}^d$ and a
linear subspace $E\subset\mathbb{R}^d$, define
\begin{eqnarray*}\Gamma(v,E)=\begin{cases}\, \mbox{ the projection
 of} \,v/\|v\|\,\mbox{ onto} \,E,\,&v\neq0,\\
 \,0,&v=0.\end{cases}\end{eqnarray*}
 Furthermore, $\Gamma(F,E)$ is viewed as an operator taking values in $F\subset \mathbb{R}^d$. We adopt the classic metric $d_G$ on the Grassmann bundle
 $\cup_{0\leq i\leq d}G(i)$:
\begin{equation*}d_G(F,\,F')=\|\Gamma(\mathbb{R}^l,F)-\Gamma(\mathbb{R}^l,F')\|:=
\sup_{v\in \mathbb{R}^l}\|\Gamma(v,F)-\Gamma(v,F')\|,\end{equation*}
 where $F, F'\in \cup_{0\leq i\leq d}G(i)$. The definitions
above can be naturally extended to the translations of linear
subspaces. When $F, F'$ are disjoint, $d_G$ is equivalent to the
angle $\angle$.
\begin{Def}\label{DefMeanAng}
Let $E$ and $F$ be two $Df-$invariant sub-bundles of $TM$.  We call
$$md_G(E(x), F(x)):=\lim_{n\to+\infty}\frac{1}{n}\sum^{n-1}_{i=0}d_G(Df^iE(x),\,Df^iF(x))$$
the mean distance between $E$ and $F$ at $x$.
\end{Def}

Suppose that $f$ preserves an ergodic measure $\omega$ with its
Oseledec splitting
\begin{eqnarray}\label{equation:1.2}T_{x}M=E_{1}(x)\oplus\cdot\cdot\cdot\oplus
E_{s}(x),\quad s\leq d=\dim M, \quad x\in O(\omega). \end{eqnarray}
By the Birkhoff Ergodic Theorem, there is an $\omega-$full measure
subset in $O(\omega)$ such that every point $x$ in this subset
satisfies
$$md_G(E_i(x), E_j(x))=\int d_G(E_i(y), E_j(y))d\omega(y).$$
We call $\int d_G(E_i(y), E_j(y))d\omega(y)$ the mean distance
between $E_i:=\cup_{x\in O(\omega)} E_i(x)$ and $E_j:=\cup_{x\in
O(\omega)} E_j(x)$ and write it as $m{d_G}(\omega)(E_i, E_j),$
$\forall\, 1\leq i\ne j \le d.$

\bigskip


Given  $\gamma=(E_1,\cdots, E_t)\in T_xM\times \cdots \times T_xM,$
denote by $A(\gamma)$ the matrix
$$(\det(E_i)\det(E_j)\cos\angle(E_i,E_j))_{t\times t}.
$$ Let $\sigma(\gamma)$ denote the set of all eigenvalues of
$A(\gamma)$ and let $\tau(\gamma)$ be the smallest eigenvalue. Note
that $A(\gamma)$ is a real positive-definite symmetric matrix,
therefore, $\sigma(\gamma)\subset (0, +\infty).$

 For $x\in
O(\omega)$, we define the independence number of $x$ by the
independence number of bundles at $x$ whose elements are all on
different invariant bundles. More precisely, take
$\gamma(x)=(E_1(x),\,E_2(x),\,\cdots,\,E_s(x))$. Then we define
$$\tau(x):=\tau(\gamma), $$ the smallest eigenvalue of
$A(\gamma).$ Clearly, $\tau (x)$ is well defined for $x\in
O(\omega).$ Moreover, by the Birkhoff Ergodic Theorem, the equation
$$\lim_{n\rightarrow+\infty}\frac 1 n \sum_{i=0}^{n-1}\tau(f^ix)
=\int\tau(y)d\omega(y)$$ holds on an $\omega-$full measure subset of
$O(\omega)$. Therefore, we can define the independence number of
$\omega$ by
$$\tilde\tau(\omega):=\int\tau(y)d\omega(y).$$
\bigskip

Assume there is another ergodic hyperbolic measure $\omega'$ with a
$Df$-invariant splitting
 \begin{equation}\label{equation:1.3}
T_yM=E_1(y)\bigoplus E_2(y)\bigoplus\cdot\cdot\cdot\bigoplus
E_{s}(y),\quad \dim(E_r)=n_r, \quad 1\leq r\leq s, \quad y\in
O(\omega').
\end{equation}
 Under these assumptions  we further describe the
approximation of Oseledec splittings. Remember both (\ref
{equation:1.2}) and (\ref{equation:1.3}) are arranged according to
the increasing order of Lyapunov exponents.

\begin{Def}\label{Defsplittingclose}
Let $\eta>0.$  The  Oseledec splitting (\ref {equation:1.2}) of
$\omega$ is $\eta$ approximated by (\ref {equation:1.3}) of
$\omega'$,  if there exists a measurable subset $\Gamma$ satisfying:
\begin{enumerate}

\item[(a).]\,
 $
\omega(\Gamma)>1-\eta$;

\item[(b).]\, for any $x\in \Gamma$, there exist a point $z=z(x)\in \supp(\omega')\cap O(\omega')$ and
$\beta(z)=(E_1(z), \cdots, E_s(z))\in \cv(n_1,\cdots,n_s;z)$ such
that
$$\dist(\gamma, \beta)<\eta,$$
where  the Oseledets bundle $\gamma(x)=(E_1(x), \cdots, E_{s}(x))\in
\cv(n_1,\cdots,n_s;x)$.
\end{enumerate}
\end{Def}

 Now we state our main result of this note.

\bigskip
\begin{Thm}\label{Thmmeanangleclose}
Let $f: M\to M$ be a $C^{1+r}$ diffeomorphism preserving an ergodic
hyperbolic measure $\omega$ with its Oseledets splitting
$$T_{\Lambda}M=E_1(\Lambda)\bigoplus
E_2(\Lambda)\bigoplus\cdot\cdot\cdot\bigoplus E_{s}(\Lambda),\quad
\dim(E_i)=n_i, \quad 1\leq i\leq s$$ arranged according to the
increasing order of Lyapunov exponents of $\omega$, where
$\Lambda=\bigcup_{k\geq1}\Lambda_{k}$ is the Pesin set associated
with $\omega$.
  Given $\varepsilon>0,$ there is a
hyperbolic periodic orbit $\orb(z,\, f)$ together with an invariant
splitting
$$T_{z}M=E_1(z)\bigoplus
E_2(z)\bigoplus\cdot\cdot\cdot\bigoplus E_{s}(z),\quad
\dim(E_i)=n_i, \quad 1\leq i\leq s$$ at $z$ arranged according to
the increasing order of Lyapunov exponents of the orbit $\orb(z)$
such that the atomic measure $\omega_z$ supported on $\orb(z,f)$
satisfies the following properties:
\begin{enumerate}
\item[(i).] Mean distance of $\omega$ and of $\omega_z$ are
$\varepsilon-$close, that is,
$$
|md_G(\omega)(E_i(\Lambda), E_j(\Lambda))
-md_G(\omega_z)(E_i(\orb(z)), E_j(\orb(z))) |<\varepsilon,\quad
\forall\,1\leq i\neq j\leq s;$$

\item[(ii).] Independence numbers of $\omega$ and of
$\omega_z$ are $\varepsilon-$close, that is,
$$|\tilde
\tau(\omega)-\tilde \tau(\omega_z)|<\varepsilon;$$

\item[(iii).] The Oseledec splitting of $\omega$ is
$\varepsilon-$approximated by that of $\omega_z$.
\end{enumerate}
\end{Thm}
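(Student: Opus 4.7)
The plan is to combine Pesin block regularity with Katok's closing lemma and the Lyapunov-exponent approximation of Theorem~\ref{thm exponents approximation}, upgrading from closeness of spectra to closeness of the full dimension-matching splitting.

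First, I would fix $\varepsilon>0$, choose $k$ large enough that $\omega(\Lambda_k)>1-\varepsilon/4$, and work inside $\Lambda_k$, where the Oseledets splitting $x\mapsto(E_1(x),\ldots,E_s(x))$ is continuous with uniform angle and hyperbolicity estimates. Consequently the functions $x\mapsto d_G(E_i(x),E_j(x))$ and $x\mapsto\tau(x)$ are continuous on $\Lambda_k$ and extend continuously to an open neighborhood after extending the splitting continuously. I would pick an $\omega$-generic point $x_0\in\Lambda_k$ whose orbit returns to $\Lambda_k$ with frequency $\omega(\Lambda_k)$ and whose Birkhoff averages of the above continuous observables converge to $md_G(\omega)(E_i,E_j)$ and $\tilde\tau(\omega)$ respectively.

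Second, using Poincar\'e recurrence I would select arbitrarily large return times $N$ with $f^N x_0$ close to $x_0$ inside $\Lambda_k$, and apply Katok's closing lemma to produce a hyperbolic periodic point $z$ of period $N$ whose orbit $\delta$-shadows $x_0,fx_0,\ldots,f^{N-1}x_0$ for $\delta$ as small as wanted. The atomic measure $\omega_z$ of $\orb(z,f)$ is then weak-$*$ close to the empirical measure of $x_0$ up to time $N$, which in turn approximates $\omega$ on continuous test functions.

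Third, the main step, I would produce an invariant splitting of $\orb(z,f)$ with the prescribed dimensions $(n_1,\ldots,n_s)$ and show its bundles at $z$ are $\varepsilon$-close in $d_G$ to the Oseledets bundles at the shadowed point. By Theorem~\ref{thm exponents approximation} (applied with $\varepsilon$ much smaller than $\min_i(\lambda_{i+1}-\lambda_i)$) one may assume the Lyapunov exponents of $z$ cluster into $s$ groups of sizes $n_1,\ldots,n_s$ whose convex hulls are pairwise disjoint. The spectral gap between these groups, together with the uniform domination of the Oseledets splitting on $\Lambda_k$ and the long shadowing time $N$, allows a standard cone-field / graph-transform argument to produce on $\orb(z,f)$ a dominated invariant splitting $E_1(z)\oplus\cdots\oplus E_s(z)$ with $\dim E_i(z)=n_i$; because the shadowing orbit spends most of its time near $\Lambda_k$ and the Oseledets splitting is continuous there, the constructed $E_i(z)$ lies within $\varepsilon$ of $E_i(x_0)$ in $d_G$.

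Finally, (i) and (ii) follow because the shadowing and the $\varepsilon$-closeness of the splittings give that the Birkhoff averages of $d_G(E_i,E_j)$ and $\tau$ along $\orb(z,f)$ differ from those along $x_0$ by $O(\varepsilon)$, and the latter are $O(\varepsilon)$-close to the integrals against $\omega$ by genericity and by the small measure of the complement of $\Lambda_k$. For (iii) I would take $\Gamma$ to be the set of $x\in\Lambda_k$ whose orbit segment can be $\delta$-shadowed by a point of $\orb(z,f)$ at which the constructed splitting $(\varepsilon/2)$-matches the Oseledets splitting; continuity of the splitting on $\Lambda_k$ together with the density of the periodic orbit along $x_0$'s trajectory forces $\omega(\Gamma)\ge 1-\varepsilon$. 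The main obstacle is precisely the third step: for simple spectrum this was the content of \cite{Liang}, while in general one must cluster the periodic orbit's log-eigenvalues into the correct multiplicities $n_i$ and robustly transport the Oseledets directions by exploiting uniform domination on $\Lambda_k$, which is exactly the work required to remove the simple-spectrum hypothesis.
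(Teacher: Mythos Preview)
Your approach is genuinely different from the paper's. You propose a direct geometric route: shadow a generic orbit by a periodic one via Katok's closing lemma, then transport the Oseledets bundles to the periodic orbit by a cone-field/graph-transform argument. The paper instead works measure-theoretically on the compact flag bundle $\cw=\cw(l_1,\ldots,l_{s-1})$ of pairs of nested flags. Its key step is Lemma~\ref{good ergodic measure covering omega}: there is a \emph{unique} $D^{\#}f$-invariant measure $m$ on $\cw$ projecting to $\omega$ with $\int\phi_i\,dm=\sum_{j\le i}n_j\lambda_j$ and $\int\psi_i\,dm=\sum_{j>i}n_j\lambda_j$; this $m$ is precisely the lift of $\omega$ along the Oseledets flag. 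The periodic orbits from Theorem~\ref{thm exponents approximation} give splittings of the correct dimensions, which lift to atomic measures $m_n$ on $\cw$; any weak-$*$ limit of the $m_n$ projects to $\omega$ and, by the exponent approximation, satisfies the same integral identities, hence equals $m$ by uniqueness. Once $m_n\to m$ (equivalently $\rho_n\to\rho_0$ on $\cu$), parts (i)--(iii) drop out from continuity of $d_G(E_i,E_j)$, $\tau$, and a compactness argument on a Pesin block. The flag-bundle uniqueness lemma thus replaces your cone-field transport entirely.

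Your outline leaves open exactly the step you yourself identify as ``the main obstacle''. Declaring the transport of the higher-dimensional $E_i$ to the periodic orbit a ``standard cone-field/graph-transform argument'' using ``uniform domination on $\Lambda_k$'' is optimistic: the Oseledets splitting is not dominated, and on $\Lambda_k$ the separation between $E_i$ and $E_{i+1}$ is only uniform in the tempered sense (constants like $e^{\epsilon k}e^{\epsilon|m|}$), so cones are contracted only after a waiting time that grows along the orbit. Carrying this through to an invariant splitting on the closed orbit that is $d_G$-close to $E_i(x_0)$ is precisely the technical content of \cite{Liang} in the simple-spectrum case, and your proposal does not say what replaces that argument when $n_i>1$ beyond ``cluster the eigenvalues''. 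There is also a smaller slip: you construct $z$ by Katok's closing lemma but then invoke Theorem~\ref{thm exponents approximation} to control its spectrum; that theorem produces \emph{some} periodic point with nearby exponents, not the one you built. You would need to argue, as in the proof of Theorem~\ref{thm exponents approximation}, that the closing-lemma point itself has exponents $\varepsilon_n$-close to the $\lambda_i$, or else start from the periodic point supplied by that theorem---which is what the paper does.
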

\smallskip

\section{Proofs}
\bigskip

In the beginning, we will find an isolated ergodic measure on
$\cw(n_1,\cdots,n_s)$ covering $\omega$, which is needful in the the
following proofs.

\begin{Lem}\label{good ergodic measure covering omega}
There exists   one and only one invariant measure $m\in
\cm_{inv}(\cw,D^{\#}f)$ such that $\pi_{*}(m)=\omega$ and
$$\int_{\cw}\phi_idm=\sum_{j=1}^i n_j\lambda_j,\quad \int_{\cw}\psi_idm=\sum_{j=i+1}^s n_j\lambda_j,\quad 1\leq i\leq s-1.$$
In addition, $m$ is ergodic and $m(\cw_1)=1$.
\end{Lem}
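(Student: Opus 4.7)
The plan is to construct $m$ explicitly as the push-forward of $\omega$ by the measurable section of $\cw$ coming from the Oseledets splitting, and then to derive uniqueness from an extremal characterization of the two integral constraints.

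First I would define a measurable map $\Psi\colon O(\omega)\to\cw$ by $\Psi(x)=\sigma\bigl((E_1(x),\ldots,E_s(x))\bigr)$, using the Oseledets splitting at $x$ arranged in the increasing order of Lyapunov exponents. Since this splitting lies in $\cv(n_1,\ldots,n_s;x)$ and $\sigma\colon\cv\to\cw_1$ is a diffeomorphism, $\Psi$ takes values in $\cw_1$. Setting $m:=\Psi_*\omega$ makes $\pi_*m=\omega$ and $m(\cw_1)=1$ automatic. The fundamental Oseledets invariance $Df(E_i(x))=E_i(fx)$ yields the semi-conjugacy $D^{\#}f\circ\Psi=\Psi\circ f$ on $O(\omega)$; combined with the $f$-invariance and ergodicity of $\omega$, this gives $D^{\#}f$-invariance and ergodicity of $m$. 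The integral identities then reduce, via the change-of-variables formula and the Birkhoff/Oseledets ergodic theorem, to evaluating the exponential growth rate of the volume of $Df^n$ restricted to $\bigoplus_{j\leq i}E_j(x)$ (respectively $\bigoplus_{j> i}E_j(x)$), which equals $\sum_{j\leq i}n_j\lambda_j$ (respectively $\sum_{j>i}n_j\lambda_j$).

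For uniqueness, suppose $m'\in\cm_{inv}(\cw,D^{\#}f)$ also satisfies $\pi_*m'=\omega$ together with both integral identities. Disintegrate $m'=\int m'_x\,d\omega(x)$ over the fibres $\cw(l_1,\ldots,l_k;x)$; by $D^{\#}f$-invariance the fibre measures are $Df$-equivariantly transported along orbits. The key ingredient is the following variational inequality, valid for Oseledets-generic $x$ and any $l_i$-dimensional subspace $F\subset T_xM$ belonging to a $Df$-invariant measurable subbundle:
$$\lim_{n\to\infty}\frac{1}{n}\log\bigl|\det(Df^n\mid_F)\bigr|\;\geq\;\sum_{j=1}^{i}n_j\lambda_j,$$
with equality iff $F=\bigoplus_{j\leq i}E_j(x)$; symmetrically, the reverse bound holds for $(d-l_i)$-dimensional invariant subbundles $H$, saturated precisely when $H=\bigoplus_{j>i}E_j(x)$. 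Applying the Birkhoff ergodic theorem to $\phi_i$ and $\psi_i$ on $(\cw,D^{\#}f,m')$ and invoking the extremality forces the fibre measure $m'_x$ to be the Dirac mass at $\sigma\bigl((E_1(x),\ldots,E_s(x))\bigr)$ for $\omega$-a.e.\ $x$, yielding $m'=m$.

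The main obstacle is establishing the variational characterization above, in particular that the bound is saturated \emph{only} by the Oseledets subbundle. I would handle this by restricting the cocycle $Df$ to the orbit $\{D^{n}f(F)\}_{n\geq 0}$ and invoking the multiplicative ergodic theorem on the restricted cocycle: its Lyapunov spectrum is a sub-multiset (with multiplicity) of the full spectrum $\{\lambda_1^{(n_1)},\ldots,\lambda_s^{(n_s)}\}$, so the sum of its exponents is at least the sum of the $l_i$ smallest, and equality requires $F$ to capture exactly the $\bigoplus_{j\leq i}E_j$ directions. Measurability of the disintegration fibrewise is what lets this a.e.\ pointwise argument be integrated back into the measure-level identity.
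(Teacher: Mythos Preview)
Your argument is correct, and for uniqueness it coincides with the paper's: both rely on the extremal characterization that for any $D f$-invariant measurable $l_i$-dimensional subbundle $F$ one has $\lim_n\frac1n\log|\det(Df^n\mid_F)|\ge\sum_{j\le i}n_j\lambda_j$ with equality precisely when $F=\bigoplus_{j\le i}E_j$, and dually for $H$ with $\psi_i$. From this, equality of the integrals forces $m'$-a.e.\ fibre point to be $\sigma((E_1(x),\ldots,E_s(x)))$, hence $m'=m$; this is exactly the paper's concluding step.

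Where you differ is in the \emph{existence} part. The paper does not push $\omega$ forward by the Oseledets section. Instead it fixes one Oseledets-regular point $x$, forms the empirical measures $\mu_n=\frac1n\sum_{i=0}^{n-1}\delta_{D^{\#}f^i\gamma_1}$ along the $D^{\#}f$-orbit of $\gamma_1=\sigma(E_1(x),\ldots,E_s(x))$, passes to a weak* limit $\nu_0$, and then uses the ergodic decomposition of $\nu_0$ together with the extremal inequality to show that $\tau_{\nu_0}$-almost every ergodic component $m$ already satisfies the integral identities; uniqueness then singles $m$ out. Your construction $m=\Psi_*\omega$ with $\Psi(x)=\sigma((E_1(x),\ldots,E_s(x)))$ is shorter and more transparent: invariance, ergodicity, $\pi_*m=\omega$ and $m(\cw_1)=1$ are immediate from the equivariance $D^{\#}f\circ\Psi=\Psi\circ f$, and the integral identities follow by one application of Oseledets to volume growth. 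The paper's longer route does buy something, namely an explicit description of the set $\mathcal A$ of all ergodic lifts of $\omega$ to $\cw$, but that description is not needed elsewhere in the paper, so your streamlining loses nothing essential.
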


\begin{proof}

By Oseledets theorem, take and fix a point $x\in Q_m(M,\, f)$,
$\gamma_0=(E_1(x),\cdots,E_s(x))\in \cv$ so that
$$\lim_{n\to\pm\infty} \frac 1n\Sigma_{i=0}^{n-1}\phi_t((Df^i\gamma_0)
=n_t\lambda_t,\,\,\,1\leq t\leq s.$$ Let
$\gamma_1=\sigma(\gamma_0)$. Define a sequence of measures $\mu_n$
on $\cw$ by
$$\int\,\phi d\mu_n:=\frac 1n \Sigma_{i=0}^{n-1}\phi(D^{\#}f^i\gamma_1),\,\,\,\,
\,\,\forall \phi\in C^0(\cw, \mathbb{R}).$$ By taking a subsequence
when necessary we can assume that $\mu_n\to \nu_0.$ It is standard
to verify that $\nu_0$ is a $D^{\#}f$-invariant measure and $\nu_0$
covers $m,$ i.e., $\pi_*(\nu_0)=\omega.$ Set
$$Q(\cw, D^{\#}f):=
\cup_{\nu \in \mathcal{M}_{erg}(\cw, D^{{\#}}f)} Q_\nu(\cw,
D^{\#}f).$$ Then $ Q(\cw, D^{\#}f)$ is a $D^{\#}f-$invariant total
measure subset in $\cw.$ We have
\begin{align*}
&\omega( Q_{\omega}(M, f) \cap \pi Q(\cw, D^{\#}f))\\
\ge & \nu_0(\pi^{-1}Q_{\omega}(M, f)\cap Q(\cw, D^{\#}f))\\
=&1.
\end{align*}
Then the set
\begin{align*} \mathcal{A}:=\{\mu\in \mathcal{M}_{erg}(\cw,
D^{\#}f)\,|\,\,&\exists \,\, \gamma\in
Q(\cw, D^{\#}f), \pi(\gamma)\in Q_{\omega}(M, f), s.\,t. \\
&\lim_{n\to+\infty}\frac 1n\Sigma_{i=0}^{n-1} \phi(D^{\#}f^i\gamma)
=\lim_{n\to-\infty}\frac 1n\Sigma_{i=0}^{n-1}
\phi(D^{\#}f^{i}\gamma)\\
&=\int _{\cw} \phi\,d\mu\,\,\,\,\,\,\quad\quad\quad \forall \phi\in
C^0(\cw, \mathbb{R})\,\}
\end{align*}
is non-empty. It is clear that $\mu $ covers $\omega$,
$\pi_*(\mu)=\omega,$ for all $\mu\in \mathcal{A}.$ And we claim that
$\mathcal{A}$ coincides with the set of all the measures in $
\mathcal{M}_{erg}(\cw, D^{\#}f)$ that cover $\omega$.  In fact, if
$\mu\in\mathcal{ M}_{erg} (\cw, D^{\#}f)$ covers $\omega$,
$\pi_*\mu=\omega, $ from the fact that $\mu(Q_\mu(\cw, D^{\#}f))=1$,
we have
\begin{align*}
&\omega( Q_{\omega}(M, f) \cap \pi Q_\mu(\cw, D^{\#}f))\\
\ge & \mu(\pi^{-1}Q_{\omega}(M, f)\cap Q_\mu(\cw, D^{\#}f))\\
=&1.
\end{align*}
Thus there is $\beta\in Q_\mu(\cw, D^{\#}f)$ with $\pi(\beta)\in
Q_{\omega}(M,f),$ which means $\mu\in \mathcal{A}.$ Therefore,
$$\mathcal{A}=\{\mu\in \mathcal{M}_{erg}(\cw, D^{\#}f)\,\,|\,\,
\pi_*(\mu)=\omega\}.$$
\bigskip
Assume the ergodic decomposition of $\nu_0$ is of form
$$\nu_0=\int_{\mathcal{M}_{erg}(\cw,D^{\#}f)}d\tau_{\nu_0}(m).$$
Then $\tau_{\nu_0}(\mathcal{A})=1$. Since $\mu_n\to\nu_0$ and
$\phi_i$ $(1\leq i\leq s)$ are continuous,
$$\int_{\cw}\phi_id\nu_0=\lim_{n\to+\infty}\int_{\cw}\phi_id\mu_n=\sum_{t=1}^{i}n_t\lambda_t.$$
Using the ergodic decomposition of $\nu_0$, we obtain
$$\int_{\mathcal{M}_{erg}(\cw,D^{\#}f)}\int_{\cw}\phi_idmd\tau_{\nu_0}(m)=\int_{\cw}\phi_id\nu_0= \sum_{t=1}^{i}n_t\lambda_t,\,\,\,\,1\leq i\leq s-1.$$
 Observe that for any $m\in \ca$, for $m$-almost $\gamma$
$$\int_{\cw}\phi_idm=\lim_{n\to+\infty}\frac1n\sum_{j=0}^{n-1}\phi_i(D^{\#}f^j\gamma)\geq\sum_{t=1}^{i}n_t\lambda_t,$$
and the equality holds if and only if
$\pi_i(\gamma(x))=\oplus_{t=1}^{i}E_t(x)$.
 Hence,   for $\tau_{\nu_0}$-almost $m\in \mathcal{A}$, for
$m$-almost $\gamma(x)\in \cw$,
$$\int_{\cw}\phi_idm=\lim_{n\to+\infty}\frac{1}{n}\sum_{j=0}^{n-1}\phi_i(D^{\#}f^j\gamma)=\sum_{t=1}^{i}n_t\lambda_t,\,\,\,1\leq i\leq s-1.$$
In the same manner, for any $m\in \ca$, for $m$-almost $\gamma$
$$\int_{\cw}\psi_idm=\lim_{n\to+\infty}\frac1n\sum_{j=0}^{n-1}\psi_i(D^{\#}f^j\gamma)\leq\sum_{t=i+1}^{s}n_t\lambda_t,$$
and the equality holds if and only if
$\widehat{\pi}_i(\gamma(x))=\oplus_{t=i+1}^{s}E_t(x)$. It follows
that
 for $\tau_{\nu_0}$-almost $m\in \mathcal{A}$, for
$m$-almost $\gamma(x)\in \cw$,
$$\int_{\cw}\psi_idm=\lim_{n\to+\infty}\frac{1}{n}\sum_{j=0}^{n-1}\psi_i(D^{\#}f^j\gamma)=\sum_{t=i+1}^{s}n_t\lambda_t,\,\,\,1\leq i\leq s-1.$$

Additionally, if  $m' \in \mathcal{M}_{inv}(\cw, D^{\#}f)$
satisfying that  $\pi_{*}(m')=\omega$ and
$$\int_{\cw}\phi_idm=\sum_{j=1}^i n_j\lambda_j,\quad \int_{\cw}\psi_idm=\sum_{j=i+1}^s n_j\lambda_j,\quad 1\leq i\leq s-1,$$
then by above proof we can see that for $m'$-almost $\gamma(x)\in
\cw$, $\pi_i(\gamma(x))=\oplus_{t=1}^{i}E_t(x)$,
$\widehat{\pi}_i(\gamma(x))=\oplus_{t=i+1}^{s}E_t(x)$, $1\leq i\leq
s-1$. So, $m'=m$. That is, $m$ is unique and ergodic and
$m(\cw_1)=1$.
\end{proof}

\bigskip

{\bf Proof of Theorem \ref{Thmmeanangleclose}}

Take a decreasing sequence $\{\varepsilon_n\}_{n=1}^{\infty}$ which
approaches zero. We further assume that
$$\varepsilon_1<\frac12\min\big{\{}|\lambda_i-\lambda_j|\mid \,1\leq i\neq j\leq s\big{\}}.$$
 Applying Theorem \ref{thm
exponents approximation}  to the ergodic measure $\omega$, we can
find
 a hyperbolic periodic point $z_n$ with period $p_n$ which satisfies the
 following properties:

\begin{enumerate}

\item[(a)]the atomic measure
$\omega_{n}=\frac{1}{p_n}\sum^{p_n-1}_{i=1}\delta_{f^iz_n}$ is
$\varepsilon_n-$close to the measure $\omega$ in the weak*-topology.

\item[(b)]
Without loss of generality we assume that $\omega_n$ has an
invariant splittings $E_1^n\oplus E_2^n\oplus \cdots \oplus E_{s}^n$
on $T_{\orb(z_n,f)}M$ such that $\dim(E_i^n)=n_i$ $(1\leq i\leq s)$
and for any $0\neq v\in E_i^n(z_n)$, the Lyapunov exponents
\begin{eqnarray}\label{small gap of Lyapunov exponnets}\lambda_i-\varepsilon_n<\liminf_{k\to
\infty}\frac1k\log\|Df^k_{z_n}v\|\leq \limsup_{k\to
\infty}\frac1k\log\|Df^k_{z_n}v\|
<\lambda_i+\varepsilon_n.\end{eqnarray}

\end{enumerate}

\bigskip
Let $\xi_n(x)=(E_1^n(x), E_2^n(x), \cdots  E_{s}^n(x))$ for $x\in
\orb(z)$. Thus, there is an ergodic invariant measure $m_n\in
\mathcal{M}_{erg}(\cw_1,D^{\#}f)$ such that $\pi_{*}(m_n)=\omega_n$
and
$$m_n(\sigma(\xi_n(f^tz_n)))=\frac{1}{p_n},\,\,\,\,\, 0\leq t\leq p_n-1.$$ At
most taking  a subsequence, we suppose that $m_n$ converges to an
invariant $\mu\in \cm_{inv}(\cw, D^{\#}f)$. Since
$\pi_{*}(m_n)=\omega_n$ and $\lim_{n\to +\infty}\omega_n=\omega$, so
$\pi_{*}(\mu)=\omega$.

By (\ref{small gap of Lyapunov exponnets}), it is easy to verify
that
\begin{eqnarray*}\sum_{j=1}^in_j(\lambda_j-\varepsilon_n)&<&\int_{\cw}\phi_idm_n<
\sum_{j=1}^in_j(\lambda_j+\varepsilon_n),\quad 1\leq i\leq
s-1,\\[2mm]
\sum_{j=i+1}^sn_j(\lambda_j-\varepsilon_n)&<&\int_{\cw}\psi_idm_n<
\sum_{j=i+1}^sn_j(\lambda_j+\varepsilon_n),\quad 1\leq i\leq
s-1.\end{eqnarray*} Letting $n\to \infty$, we deduce
\begin{eqnarray*}\int_{\cw}\phi_id\mu&=&\sum_{j=1}^in_j\lambda_j,\quad 1\leq i\leq
s-1,\\[2mm]\int_{\cw}\psi_id\mu&=&\sum_{j=i+1}^s n_j\lambda_j,\quad 1\leq
i\leq s-1.\end{eqnarray*} By Lemma \ref{good ergodic measure
covering omega}, it follows that $\mu=m$.

Denote $\rho_0=\sigma^{-1}_{*}(m)$, $\rho_n=\sigma^{-1}_{*}(m_n)$.
Then $\rho_0, \rho_n\in \cm_{erg}(\cv, D^{\#}f)$ and $\rho_n\to
\rho_0$ as $n\to +\infty$.

\noindent{\bf Proof of (i)}\,\,We need verify that
\begin{eqnarray}\label{mean angle}\,\,\,\,\,\lim_{n\to\infty}\frac{1}{p_{n}}\sum_{k=0}^{p_{n}-1}\angle(E_i^n(f^k(z_{n})),\,E_j^n(f^k(z_{n})))=
m\angle_{\omega}(E_i,\,E_j) ,\,\,1\leq i\neq j\leq s,\end{eqnarray}
where $E_1^n(\orb(z))\oplus\cdots \oplus E_s^n(\orb(z))$ is the
invariant splitting given as above (b). We define $\phi_{ij}: \cu\to
\mathbb{R}$, $(E_1,\cdots,E_s)\to \angle (E_i,E_j)$.  Then
$\phi_{ij}$ is bounded and continuous. Noting that $\rho_n\to
\rho_0$, we get (\ref{mean angle}).

\smallskip

\noindent{\bf Proof of (ii)}\,\,
 We recall the measures $\omega_n$ and
$\omega$  and $m_n$ and $m$ and their relations:
$$\omega_n\to\omega,\,\,\,\,\rho_n\to
\rho_0,\,\,\,\,\pi_*(\rho_n)=\omega_n,\,\,\,\,\pi_*(\rho_0)=
\omega.$$ Since the function $\tau:\,\cu\to\mathbb{R}$ is
continuous, so $$\int \tau\,d\rho_n\to \int \tau\,d\rho_0.$$
 Moreover,
the ergodic measure $m\in \mathcal{M}_{erg}(\cu,D^{\#}f)$ is unique
in the sense of Lemma \ref{good ergodic measure covering omega},
which implies
$$\int \tau\,d\rho_0=\int \tau\,d\omega.$$ Note that $\omega_n$ is an
ergodic measure whose spectrum are increasingly  arranged as
$E_1^n\oplus\cdots\oplus E_s^n$. This implies that
$$\int \tau\,d\rho_n=\int \tau\,d\omega_n.$$  Hence we have that $\int
\tau\,d\omega_n\to \int \tau\,d\omega$ or $\tilde\tau(\omega_n)\to
\tilde\tau( \omega).$ We thus obtain (ii).
\smallskip

\bigskip

\noindent{\bf Proof of (iii)}\,\, Given $\varepsilon>0$, take $l$
large, so that
$$ \omega(\Lambda_{l}( \omega ))>1-\varepsilon, \eqno(4.12)$$ where
$\Lambda_l(\omega)$ denotes the $l-th$ Pesin block associated with
$\omega $. Since the splitting
$$x\to
E_1(x)\oplus\cdots\oplus E_s(x)$$ depends continuously on
$x\in\Lambda_{l}( \omega )$, we can choose a uniform constant
$\eta>0$ satisfying
$$\eta<\min_{{ i\neq j}\atop{x\in\Lambda_l(\tilde \omega
)}}\{\frac{1}{10}\angle(E_i(x),\,E_j(x)),\,\varepsilon\}.\eqno(4.13)$$
For each   $x\in\Lambda_{l}(\omega )\cap \,\supp(\omega)$, we take
and fix   $\alpha_0(x)=(E_1(x),\cdots,E_s(x))$.  Denote by
$B(\alpha_0(x),\,\eta)$ the $\eta-$neighborhood of $\alpha_0(x)$
under  the   metric on the Grassman bundle. Then
$m(B(\alpha_0(x),\,\eta))>0.$ Recalling that $\rho_n\rightarrow
\rho_0$, we have
$$\liminf_{n\rightarrow+\infty}\rho_n(B(\alpha_0(x),\,\eta))\geq \rho_0(B(\alpha_0(x),\,\eta))>0.$$
Therefore, we can take  an integer $N(x)=N(\alpha_0(x))>0$ such that
$$\rho_{n}(B(\alpha_0(x),\,\eta))>0,\,\,\,\,\forall\, n\geq N(x).$$
This implies  the existence of  an element $\beta_n(x)$ in $
Q_{\rho_{n}}(\cu,D^{\#}f) \cap\, \supp(\rho_{n})\cap
B(\alpha_0(x),\,\eta)$ for each $ n\geq N(x).$ Observing that
$\rho_{n}$ covers the atomic measure $\omega_{n}$, we can deduce
that $\beta_{n}(x)$ must be an element based on a periodic point
$z(x,n)$
 on $\orb(z_{n})$, where $z_{n}$ is the periodic point
chosen in $(a)(b)$. By the uniqueness of $\rho_n$, we know that
$\beta_n(x)=(E_1^n(z(x,n)),\cdots,E_s^n(z(x,n))$.  Thus the
Oseledets splitting of $ \omega$ at $x$ is $\eta$ approximated  by
the invariant splitting $E_1^n(z(x,n))\oplus \cdots\oplus
E_s^n(z(x,n)$ of $\omega_n$ at a point $z=z(x,\,n)$ on $\orb(z_n),
\,\, n\geq N(x) .$ Note that the number $N(x)$ may vary
 with $x$, we need to find a number $N_1$, independent of
the choice of $x\in \Lambda_{l}(\omega )$, such that $\rho_{N_1}$
meets $(iii)$. This can be done by the compactness of $\Lambda_{l}(
\omega )$ and  continuity of the Oseledets splitting on it.

Hence we complete the proof of Theorem \ref{Thmmeanangleclose}.
\hfill$\Box$
\bigskip

\bigskip

\end{document}